\theoremstyle{plain}
\newtheorem*{tHm}{Theorem}
\newtheorem*{Thm}{Main Theorem}
\newtheorem{lemma}{Lemma}[section]
\newtheorem{question}{Question}
\newtheorem{prop}{Proposition}
\theoremstyle{definition}
\newtheorem{Prop}{Proposition}[section]
\newtheorem{definition}{Definition}[section]
\newtheorem{Definition}{Definition}
\newtheorem{cor}{Corollary}[section]
\newcommand{\z}{\mathbb{Z}}
\newcommand{\sg}{\Sigma}
\newcommand{\gm}{\Gamma}
\title{\sc{On dual unit balls of Thurston norms}}
\author{Abdoul Karim SANE}
\date{ \small{}}
\begin{document}
\renewcommand{\proofname}{Proof}
\renewcommand{\abstractname}{Abstract}
\renewcommand{\refname}{Bibliography}
\maketitle
\begin{abstract}
Thurston norms are invariants of 3-manifolds defined on their second homology vector spaces, and understanding the shape of their dual unit ball is a (widely) open problem. W. Thurston showed that every symmetric polygon in $\z^2$, whose vertices satisfy a parity property, is the dual unit ball of a Thurston norm on a 3-manifold. However, it is not known if the parity property  on the vertices of polytopes is a sufficient condition in higher dimension or if their are polytopes, with mod ~2 congruent vertices, that cannot be realized as  dual unit balls of Thurston norms. In this article, we provide a family of polytopes in ~$\z^{2g}$ that can be realized as dual unit balls of Thurston norms on 3-manifolds. These polytopes come from intersection norms on oriented closed surfaces and this article widens the bridge between these two norms.  
\end{abstract}
\begin{section}{Introduction} Given an oriented 3-manifold $M$ with tori boundaries (we consider only these manifolds), W. Thurston \cite{thurst} defined  a semi-norm on the second homology vector space of $M$. Let $a\in H_2(M,\z)$ be an integer class, then $a$ admits representatives that are  disjoint unions of properly embedded surfaces $S_i$ in ~$M$. The Thurston norm of $a$ is given by:
$$x(a)=\min_{[\cup_iS_i]=a}\{\displaystyle{\sum_i{\max\{0, -\chi(S_i)}}\}\}.$$

If $M$ is prime ---any embedded sphere in $M$ bounds a ball--- and atoroidal ---has no essential torus---, then $x$ extends to a norm on $H_2(M,\mathbb{R})$. By construction $x$ takes integer values in $H_2(M,\z)$. It is an \textit{\textbf{integer norm}}: a norm on a vector space ($H_2(M,\mathbb{R})$ in this case) that takes integer values on a top dimensional lattice ($H_2(M,\mathbb{Z})$ in this case). W. Thurston showed that the dual unit ball of an integer norm on a vector space $E$ (relatively to a lattice ~$\Lambda$) is the convex hull of finitely many 1-forms $u_i\in E^*$ that take integer values on $\Lambda$:
$$x(a)=\max_{u_i}\{\langle u_i,a\rangle\}.$$
For a given 3-manifold with tori boundaries, the vectors defining the dual unit ball of the Thurston norm satisfy the \textit{\textbf{parity property}}: $$\forall (i,j);\hspace{0.2cm} u_i\equiv u_j~\mod 2.$$

\noindent When a surface $S$ realizes the norm in its homology class, namely when $x([S])=-\chi(S)$, $S$ is said to be \textit{\textbf{minimizing}}. Thurston gave some conditions under which a given surface is minimizing. For instance, closed leaves of transversally oriented foliations without Reeb component are minimizing. D. Gabai \cite{Gab} showed the converse, namely that minimizing surfaces are leaves of foliations without Reeb component on $M$.  W. Thurston also showed that if an embedded surface $S$ in ~$M$ is a fiber of a surface bundle over the circle, then $S$ is minimizing and there is a unique vector $u_i \in H^2(M,\z)$ such that $x([S])=~\langle u_i,S\rangle$. Moreover, if $S'$ is another surface such that ~$x([S'])=~\langle u_i,S'\rangle$, then $S'$ is the fiber of a surface bundle. Therefore, homology classes of fiber of bundles over the circle belong to an open cone on finitely many faces, called \textit{\textbf{fibered faces}}, of the unit ball of the Thurston norm on ~$M$. In this article, we are interested in the shape of these polytopes that appear as dual unit balls of Thurston norms. 

Just after defining his norm, Thurston started to compute it on a few examples of 3-manifolds. Even better, he showed the following:
\begin{tHm}[W. Thurston \cite{thurst}, Thm 6]
Every symmetric polygon in ~$\z^2$ with vertices defined by $\mod 2$ congruent vectors  is the dual unit ball of the Thurston norm of a 3-manifold.  
\end{tHm}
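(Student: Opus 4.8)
The plan is to reduce the hypothesis to a Minkowski (zonotope) decomposition and then transport the resulting norm from a surface to a $3$-manifold. First I would exploit the parity property combinatorially: if $P\subset\R^2$ is centrally symmetric with integral vertices all congruent $\bmod 2$, then any two consecutive vertices differ by an even integral vector, so every edge vector of $P$ has the form $2u_k$ with $u_k\in\z^2$. In the plane a convex polygon is centrally symmetric if and only if it is a zonogon, so, centering $P$ at the origin,
$$P=\sum_k\,[-u_k,u_k],\qquad u_k\in\z^2,$$
a Minkowski sum of integral segments. Since the dual unit ball of a sum of norms is the Minkowski sum of the dual unit balls (support functions add), $P$ is exactly the dual unit ball of the integer norm
$$x(a)=\sum_k\bigl|\langle u_k,a\rangle\bigr|,$$
the segment $[-u_k,u_k]$ being the degenerate dual ball of $|\langle u_k,\cdot\rangle|$. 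Thus it suffices to realize this explicit ``sum of absolute values'' norm as a Thurston norm.

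Second, I would recognize $x$ as an intersection norm on the torus, matching the paper's framework in the case $g=1$. Identifying the $2$-dimensional space carrying $x$ with $H_1(T^2;\R)=\R^2$ and choosing simple closed curves $\gamma_k$ whose classes are the $90^{\circ}$ rotations of the $u_k$, the algebraic intersection norm $N(a)=\sum_k|\langle u_k,a\rangle|$ has dual unit ball precisely $P$. This also re-explains the parity property, since the vertices of the zonogon are the sums $\sum_k\pm u_k$, all congruent to $\sum_k u_k\bmod 2$. Hence the whole content of the theorem is the bridge: producing a $3$-manifold whose Thurston norm equals this intersection norm.

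Third, and this is where the real work lies, I would build an explicit $3$-manifold $M_\Gamma$ from the torus and the multicurve $\Gamma=\bigcup_k\gamma_k$ --- for instance as a surface bundle, a sutured manifold, or a link complement naturally associated to $\Gamma$ --- with second homology identified with $H_1(T^2;\R)$, and argue $x_{M_\Gamma}=N$. The delicate points are: (i) showing the Thurston norm is \emph{exactly} $N$ rather than merely dominated by it, which requires exhibiting enough minimizing surfaces to pin down every face of $P$ and to exclude spurious vertices --- here I would invoke Thurston's fibered-face criterion together with Gabai's theorem \cite{Gab} to certify the candidate surfaces as minimizing; and (ii) ensuring $M_\Gamma$ is prime and atoroidal, so that $x_{M_\Gamma}$ is a genuine (non-degenerate) norm and $P$ is a bounded polygon. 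I expect (ii) to be the main obstacle, since torus-based constructions are prone to producing essential tori, and controlling this while keeping the integral lattice and parity exactly matched is the crux.

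As a fallback closer to Thurston's original computation \cite{thurst}, I would instead realize each elementary segment $[-u_k,u_k]$ by a fibered building block and combine the blocks by a $3$-manifold operation realizing the Minkowski sum of dual balls (Dehn surgery on a chain-link-type complement), then verify additivity of the norm and the geometric hypotheses directly. In either route the decisive step is certifying that the computed norm has $P$ as its dual unit ball with no extra faces, while keeping the ambient manifold atoroidal.
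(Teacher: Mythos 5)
Your first two steps are correct and are exactly the right reduction: the parity hypothesis forces every edge vector of $P$ to lie in $2\z^2$, a centrally symmetric polygon is a zonogon, so $P=\sum_k[-u_k,u_k]$ with $u_k\in\z^2$, and the norm with dual unit ball $P$ is $a\mapsto\sum_k|\langle u_k,a\rangle|$, which is the intersection norm of a collection of closed geodesics on $\mathbb{T}^2$ in the classes obtained from the $u_k$ by a $90^\circ$ rotation (taken with multiplicity when $u_k$ is imprimitive; filling because $P$ is non-degenerate, hence at least two slopes occur). This is precisely the viewpoint of the paper, which rewrites Thurston's theorem as $x(a)=N_{\gm}(\pi_*(a))$.

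The genuine gap is your third step, which is where all the geometric content lives and which you leave as a menu of unexecuted options (``a surface bundle, a sutured manifold, or a link complement''). Thurston's construction, recalled in Section~\ref{sec1}, is specific: smooth the multicurve $\gm$ at finitely many double points to get a single filling closed curve $\overrightarrow{\gamma}$ with nonzero homology class, let $\pi:M\to\mathbb{T}^2$ be the circle bundle of Euler number $1$, let $K$ be a lift of $\overrightarrow{\gamma}$, and take the knot exterior $M_K$. The nonzero Euler number excludes horizontal incompressible surfaces and, since $[\overrightarrow{\gamma}]\neq0$, gives $H_2(M_K,\partial M_K)\cong H_2(M)\cong H_1(\mathbb{T}^2)$ with the vertical surfaces $\pi^{-1}(\alpha)\cap M_K$ as representatives; these are punctured tori, so $-\chi(\pi^{-1}(\alpha)\cap M_K)=|\alpha\cap\gm|$, which yields $x\le N_{\gm}$. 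The reverse inequality --- your point (i) --- is the technical heart, and it is not settled by the fibered-face criterion (these vertical tori are not fibers; the paper notes that the polytopes so realized have \emph{no} fibered faces), nor by Gabai's theorem, which is the converse implication (minimizing surfaces are leaves) and cannot certify minimality. What is needed is Thurston's direction of the leaf criterion, applied after explicitly constructing a foliation of $M_K$ without Reeb components having $\pi^{-1}(\alpha)\cap M_K$ as a leaf. Neither the manifold nor the foliation appears in your proposal, so as written the proof does not go through; your worry (ii) about atoroidality is, by contrast, a non-issue here, since non-degeneracy of the norm already follows from the filling condition.
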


Our main result is an extension of Thurston's theorem to symmetric polytopes in dimension $2g$. To begin with, let us introduce intersection norms on  an oriented closed surface ~$\sg_g$. Intersection norms were quickly introduced by V. Turaev \cite{turaev}, and received a novel interpretation in the article of M. Cossarini and P. Dehornoy \cite{Direc}.  

Let $\gm=\{\gamma_1,...,\gamma_n\}$ be a finite collection of closed curves on $\sg_g$ in generic position. If $\gm$ is a \textit{\textbf{filling collection}}, that is its complement on $\sg_g$ is a union of topological disks, the intersection norm $N_{\gm}$ associated to $\gm$ restricted to homology with integer coefficients is the function defined by:
\begin{align*}
N_{\Gamma}:H_1(\sg_g,\mathbb{Z})&\longrightarrow \mathbb{Z}\\
                     a&\longmapsto \inf\{\mathrm{card}\{\alpha\cap\gm\}; [\alpha]=a \}.
\end{align*}
Intersection norms are also integer norms. Therefore, the dual unit ball of ~$N_{\gm}$ is the convex hull of finitely many vectors $v_i\in H^1(\sg,\z)$. As for Thurston norms, the vectors $v_i$ satisfy the parity property, since geometric intersection and algebraic intersection between curves have  the same parity. 

We recall that the norm is completely determined by the vectors $v_i$: $$N_{\gm}(a)=\displaystyle{\max_{v_i}\{\langle v_i,a\rangle\}}.$$ 

M. Cossarini and P. Dehornoy \cite{Direc} gave an algorithm for the computation of all the vectors defining an intersection norm.  

\begin{Definition}
A filling collection $\gm$ is \textit{\textbf{homologicaly  nontrivial}} if there exists an orientation $\overset{\rightarrow}{\gm}$ of $\gm$ such that $[\overset{\rightarrow}{\gm}]$ is a nontrivial  homology class.

A \textit{\textbf{homologicaly  nontrivial polytope}} in $\z^{2g}$ is a symmetric polytope  with vertices given by mod 2 congruent vectors that appears as the dual unit ball of an intersection norm on $\sg_g$ associated to a homologicaly  nontrivial  collection.    
\end{Definition}

If a filling collection $\gm$ is not homologicaly  nontrivial, the norm $N_{\gm}$ is an even fonction in $H_1(\sg_g,Z)$; that is the coordinates of the vectors of the dual unit ball are all even. So, many dual unit ball of intersection norms are homologicaly  nontrivial.

Our main theorem brings Thurston norms closer to intersection norms.

\begin{Thm} If $P$ is a homologicaly nontrivial  polytope, then it is the dual unit ball of a Thurston norm on a 3-manifold. 
\end{Thm}

Unlike intersection norms, computing the dual unit ball of a Thurston norm is difficult. There is an algorithm (\cite{Agol}, \cite{Marc}) that determines whether a given surface ~$S$ is Thurston norm minimizing or not. This algorithm uses the theory of sutured manifold hierarchies introduced by D. Gabai in \cite{Gab}. He used hierarchies to construct taut foliations on the exterior of many knots and as a consequence to determine their genus. M. Scharlemann \cite{scharl} then showed that hierarchies determine the Thurston norm of a homology class in general. The difficulty in this algorithm is to find a sutured manifold hierarchy for checking that an embedded surface is minimizing or not. The Thurston norm minimizing problem is NP-complete \cite{Marc}.

 Since the matter is less complicated for intersection norms, our main theorem provides many polytopes which are dual unit balls of Thurston norms. Nonetheless, we showed in \cite{Element} that there are symmetric polytopes in $\z^4$ that are not dual unit balls of intersection norms. This result makes the characterization of polytopes (in even dimensions) that appear for those two norms widely open; and one wonders if those polytopes that are not dual unit balls of intersection norms are also not dual unit balls of Thurston norms.

The proof of our main theorem goes through a detailed analysis of incompressible surfaces in the exterior of a knot in a circle bundle over a surface. From that analysis, we obtain a total description of minimizing surfaces. This avoids many of the foliation technicalities to check if a given surface is minimizing, and also the use of sutured manifold hierarchies algorithm.

\begin{Definition}
Let $M$ be a 3-manifold. An embedded surface $S$ in $M$ is \textit{\textbf{incompressible}} if any simple curve on $S$ which bounds a disk in $M$ also bounds a disk in $S$. It is equivalent to say that $i_*:\pi_1(S)\longrightarrow~\pi_1(M)$ is injective. 
\end{Definition}

If $S$ is not incompressible in $M$, then one can cut $S$ along a compressing disk. This cutting operation reduces the complexity of the surface. Therefore, a minimizing surface for the Thurston norm is incompressible. 

\begin{tHm}[F. Waldhausen \cite{waldh}]
Let $\pi:M\longrightarrow\sg_g$ be a circle bundle. Then, up to isotopy, an incompressible surface $S$ in $M$ is either \textbf{\textit{vertical}}, that is $\pi^{-1}(\pi(S))=S$, or \textit{\textbf{horizontal}}, that is $\pi|_S:S\longrightarrow\sg_g$ is a finite covering.  
\end{tHm}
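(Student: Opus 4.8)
The plan is to reduce the global dichotomy to a local analysis inside solid tori by cutting the base surface. First I would choose a finite graph $G\subset\sg_g$ whose complement is a disjoint union of open disks $D_1,\dots,D_m$ (for instance the $1$-skeleton of a cell decomposition). Since the restriction of a circle bundle to a disk is trivial, each $V_j:=\pi^{-1}(D_j)$ is a solid torus $D_j\times S^1$ with fibres $\{pt\}\times S^1$, while $A:=\pi^{-1}(G)$ is a union of \emph{vertical} annuli (over the edges of $G$) meeting along the fibres over the vertices. Thus $M$ is assembled from the solid tori $V_j$ glued along the vertical surface $A$. I would then put $S$ in general position, so that $S\cap\partial V_j$ is a closed $1$-manifold on each boundary torus and $S\cap V_j$ is a properly embedded surface in each solid torus.

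Next I would exploit incompressibility of $S$, together with irreducibility of $M$, to simplify these intersections. An innermost circle of $S\cap\partial V_j$ bounding a disk in $\partial V_j$ also bounds a disk in $S$ by incompressibility, and an innermost-disk argument using irreducibility lets me isotope $S$ to remove it; after finitely many such moves, all remaining circles on each torus $\partial V_j$ are essential, hence mutually parallel, so they carry a single slope. Each piece $S\cap V_j$ is then an incompressible (and, after further simplification, boundary-incompressible) surface in the solid torus $V_j$ with essential boundary. The classification of such surfaces is the crux: a connected boundary-incompressible incompressible surface in $D^2\times S^1$ with essential boundary is either a meridian disk or a boundary-parallel annulus. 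The only pieces that survive the \emph{global} minimisation of $|S\cap A|$ are meridian disks $D_j\times\{pt\}$, whose boundary has the section (\emph{horizontal}) slope and which are transverse to the fibres, and essential vertical annuli $\alpha\times S^1$, whose boundary has the fibre (\emph{vertical}) slope; a boundary-parallel annulus whose boundary could be pushed across $A$ has already been eliminated.

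Finally I would match the pieces across $A$ and propagate. Because each component of $A$ carries a single slope, the type of the adjacent solid-torus pieces is forced: a fibre-slope curve bounds vertical annuli on both sides, a section-slope curve bounds horizontal disks on both sides. Connectedness of $\sg_g$ then propagates a single type across the whole decomposition, so either every piece is vertical ---whence $S$ is saturated by fibres and $\pi^{-1}(\pi(S))=S$--- or every piece is horizontal ---whence $\pi|_S$ is a proper local diffeomorphism, i.e.\ a finite covering of $\sg_g$. The main obstacle is the local step: one must genuinely rule out intermediate boundary slopes and the removable boundary-parallel pieces, which is exactly where incompressibility and the minimality of $|S\cap A|$ are indispensable. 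A secondary difficulty is the bookkeeping at the fibres over the vertices of $G$, where several vertical annuli meet; I expect to handle it by first isotoping $S$ off a neighbourhood of those fibres, or by treating each vertex fibre as an additional vertical piece whose slope must agree with the adjacent annuli.
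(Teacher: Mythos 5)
This statement is Waldhausen's theorem, which the paper imports as a known result: it gives no proof at all, and simply refers the reader to Hatcher's notes for one. So there is no in-paper argument to compare against; what you have written is, in outline, exactly the standard proof from that reference (cut the base along a graph or arc system, get solid tori glued along vertical annuli, minimise $|S\cap A|$, classify the pieces in each solid torus, propagate). Two points in your sketch deserve care. First, the vertical annuli $\alpha\times S^1$ over arcs $\alpha\subset D_j$ \emph{are} boundary-parallel annuli in the solid torus $V_j$, so you cannot say that boundary-parallel pieces ``have already been eliminated''; the correct statement is that among the $\partial$-parallel annuli, those whose boundary has the fibre slope are isotoped to vertical position and kept, while those of any other essential slope contradict the minimality of $|S\cap A|$ (this is the step you rightly identify as the crux, and it is where the argument actually lives). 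Second, the propagation of a single type across the decomposition should use connectedness of $S$ rather than of $\sg_g$: adjacent pieces of $S$ share a curve on $A$ whose slope forces both to be of the same type, and one walks along the (connected) dual graph of the pieces of $S$; connectedness of the base alone would not suffice, since $S$ need not meet every annulus of $A$. With those corrections your reduction is the standard one and is sound, modulo the local solid-torus classification that you correctly cite rather than reprove.
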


For $\pi:M\longrightarrow\sg_g$ a circle bundle with Euler number 1 and $K$ an oriented knot in ~$M$ such that $\pi(K)$ is a nontrivial  homology class, we denote by $M_K$ the complement in $M$ of a tubular neighborhood of $K$.
\begin{Definition}
Let $S$ be a surface embedded in $M_K$. The \textit{\textbf{closure}} of $S$ in ~$M$, denoted by $\bar{S}$, is the surface embedded in $M$ obtained by capping all the boundary components of $S$ by disks.
\end{Definition}

\noindent The closure $\bar{S}$ is embedded in $M$ and $S=\bar{S}\cap M_K$.\vspace{0.2cm} 

 For the proof of Main Theorem, we show the following:
\begin{prop}
Let $S$ be an incompressible surface in $M_K$ and $\bar{S}$ its closure in $M$. There is sequence of incompressible surfaces $S_0\longrightarrow S_1\longrightarrow...\longrightarrow S_n=\bar{S}$ such that:
\begin{itemize}
\item $S_0$ is a disjoint union of vertical surfaces;
\item $S_{i+1}$ is obtained by attaching a handle to $S_i$. 
 \end{itemize}
\end{prop}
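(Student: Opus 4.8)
The plan is to reduce the analysis of $\bar S$ to Waldhausen's dichotomy by placing it in normal form with respect to the circle fibration $\pi$, and to read the handle decomposition off the resulting vertical and horizontal pieces. First I would pin down the boundary behaviour: since the capping disks lie in $N(K)$ and the only curves on $\partial N(K)$ bounding disks in the solid torus are meridians, every component of $\partial S$ is a meridian of $K$. Hence $\bar S$ is a closed surface meeting $K$ transversally in $|\partial S|$ points, with $S=\bar S\cap M_K$, and I may isotope $\bar S$ to be transverse to the fibers away from finitely many fibers.

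Following the proof of Waldhausen's theorem, I would next isotope $\bar S$ into \emph{vertical--horizontal normal form} $\bar S=V\cup H$, where the \textbf{vertical part} $V$ is a union of fibered tori $\pi^{-1}(c_i)$ over disjoint essential curves $c_i\subset\sg_g$, and the \textbf{horizontal part} $H$ is transverse to every fiber, with $\partial H\subset V$ a union of fibers. The incompressibility of $S$ in $M_K$ is used here exactly as in Waldhausen's argument: I would cancel tangencies with the fibration and discard every intersection circle of $\bar S$ with a vertical torus $\pi^{-1}(\gamma)$ that bounds a disk on $\bar S$ or in the torus, after arranging that the relevant compressing disks are disjoint from $K$ (this is possible because the points of $\bar S\cap K$ sit on the meridian caps, away from the essential structure).

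The key structural input is the Euler number. Because $\pi$ has Euler number $1$, there is no closed horizontal surface: a connected horizontal surface would be a degree-$d$ cover $p:H_0\to\sg_g$ and hence a section of the pulled-back bundle $p^*M\to H_0$, whose Euler number is $d\cdot 1=d\neq 0$, contradicting the existence of a section. Consequently every component of $H$ has nonempty boundary on $V$; cutting these components further along fibers, I may take them to be bands transverse to the fibration. The vertical part $V$ is then a disjoint union of vertical tori, which I take to be $S_0$, and each band is glued to $S_0$ along fibers, so that re-attaching the horizontal pieces to the vertical part amounts to a succession of $1$-handle attachments.

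It then remains to order these attachments into a sequence $S_0\to S_1\to\cdots\to S_n=\bar S$ with every $S_i$ incompressible in $M_K$, and this is the step I expect to be the main obstacle. The difficulty is twofold: organising the bands of $H$ so that attaching them one at a time is genuinely a handle attachment rather than a more complicated gluing, and verifying that no intermediate $S_i$ acquires a compression. I would treat the first point by an innermost/outermost analysis of $H$ relative to the fibration, peeling off the bands that are outermost in $\pi$. For the second point I would argue by contradiction: a compressing disk for some $S_i$ could be pushed to a compressing disk for $\bar S$ meeting $K$ and then traced back to a compression of $S$ in $M_K$ disjoint from $K$, contradicting the hypothesis. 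Carrying out this traceback cleanly --- keeping the compressing disks away from the meridian caps and from $K$ --- is the technical heart of the argument.
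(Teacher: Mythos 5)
Your approach diverges from the paper's and runs into a genuine obstruction at its first structural step. You propose to isotope $\bar S$ into Waldhausen's vertical--horizontal normal form, but that normal form is available only for surfaces that are incompressible in $M$, and the whole content of the proposition is that $\bar S$ is in general \emph{not} incompressible in $M$: the compressions that obstruct verticality are exactly those whose disks meet $K$ (this is what the paper later quantifies as the weight $w(\alpha)$, and Figure~\ref{canal} exhibits an incompressible surface in $M_K$ whose closure is a compressible closed surface). Your proposed fix --- ``arranging that the relevant compressing disks are disjoint from $K$'' --- cannot work: a compressing disk for $\bar S$ disjoint from $K$ and from the capping disks restricts to a compressing disk for $S$ in $M_K$, so if every compression could be pushed off $K$ then $\bar S$ would already be incompressible in $M$, hence vertical, and the proposition would be vacuous (and contradicted by the example just cited). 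A compressible closed surface simply cannot be decomposed as vertical tori plus fiber-transverse bands --- consider a handle admitting a compressing disk through $K$. You also flag yourself that you cannot complete the ordering of the handle attachments or the incompressibility of the intermediate stages, so the argument is incomplete even granting the normal form.

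The paper's proof goes in the opposite direction and is much shorter: instead of decomposing $\bar S$, compress it. If $\bar S$ is compressible in $M$, cut along an essential simple closed curve on $\bar S$ bounding a disk in $M$ and cap off; this strictly decreases complexity, so after finitely many steps one reaches a surface that is incompressible in $M$, which by Waldhausen's theorem together with $e(M)=1\neq 0$ (so no closed horizontal surfaces --- the one structural point your write-up does establish, by essentially the same Euler-number computation the paper invokes) is a disjoint union of vertical surfaces. Reversing the compression sequence realizes $\bar S$ as the result of successive handle attachments on vertical surfaces, which is exactly the claimed conclusion. If you want to salvage your plan, replace ``isotope $\bar S$ into normal form'' by ``compress $\bar S$ in $M$ until incompressible, apply Waldhausen to the terminal surface, and read the handles off the reversed sequence.''
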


\begin{paragraph}{Outline of this article:}
 Section ~\ref{sec1} recalls Thurston's construction of polygons as dual unit balls of Thurston norm of 3-manifolds. In Section \ref{sec2} we extend Waldhausen's classification of a knot complement in a circle bundle with nonzero Euler number. Section ~\ref{sec3} is devoted to the proof of the Main Theorem. 
\end{paragraph}
\end{section}

\begin{section}{Thurston's construction of 3-manifolds realizing polygons}\label{sec1}
In this section, we review Thurston's proof \cite{thurst} of the fact that symmetric polygons in $\z^2$ with vertices represented by $\mod2$ congruent vectors are dual unit balls of Thurston norms. It will be helpful for understanding our generalization.  

Let $\gm=\{\gamma_1,...,\gamma_n\}$ be a filling collection of closed geodesics  on a flat torus ~$\mathbb{T}^2$. Since any component of $\gm$ is simple and non-separating, there is an orientation of each component of $\gm$ such that the oriented collection ~$\overrightarrow{\gm}$ is  nontrivial  in homology (every collection of geodesics on the torus is homologicaly  nontrivial). 

\begin{figure}[htbp]
\begin{center}
\includegraphics[scale=0.11]{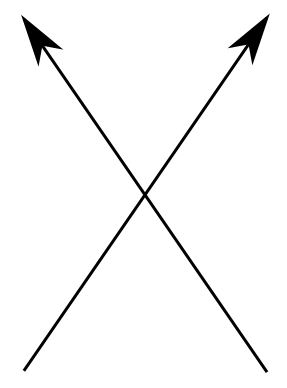}\hspace{2cm}
\includegraphics[scale=0.11]{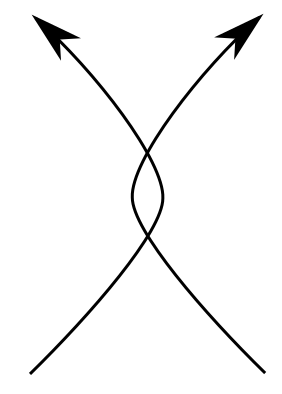}
\put(-27,5){\huge{$\longrightarrow$}}
\caption{Attaching two curves on a double point.}
\label{attach}
\end{center}
\end{figure}

By applying the operation depicted on Figure ~\ref{attach} at finitely many double points, we obtain a filling closed curve $\overrightarrow{\gamma}$ in $\mathbb{T}$ (which is not geodesic anymore). 

Now, let $\pi:M\longrightarrow\mathbb{T}$ be the circle bundle over $\mathbb{T}$ with Euler number $1$. Then, $H_2(M)$ is isomorphic to $H_1(\mathbb{T}$). 
\end{section}

Let $K$ be a lift of $\overrightarrow{\gamma}$ and $M_K$ be the complement in $M$ of a tubular neighborhood $T(K)$ of $K$. The morphism 
\begin{align*}
r:H_2(M)&\longrightarrow H_2(M_K,\partial M_K)\\
                     [S]&\longmapsto [S\cap M_K]
\end{align*}
is an isomorphism. In fact, we have the following exact sequence:  
\[...\rightarrow H_2(T(K))=0\rightarrow H_2(M)\rightarrow H_2(M,T(K))\rightarrow H_1(T(K))\rightarrow H_1(M)\rightarrow...\] 
Since $[\overrightarrow{\gamma}]=\pi_*(K)$ is nonzero, the inclusion $H_1(T(K))\rightarrow H_1(M)$ is injective. It follows that the map $H_2(M)\rightarrow H_2(M,T(K))$ is an isomorphism. By excision, we obtain the isomorphism  $r:H_2(M)\longrightarrow H_2(M_K,\partial M_K)$. 

Canonical representatives of $H_2(M_K,\partial M_K)$ are of the form $\pi^{-1}(\alpha)\cap M_K$, where $\alpha$ is an oriented simple curve in $\mathbb{T}$. Since $\pi^{-1}(\alpha)$ is a torus, then $$-\chi(\pi^{-1}(\alpha)\cap M_K)=|(\pi^{-1}(\alpha)\cap K)|=|\alpha\cap\gm|.$$

Thurston showed that if $\alpha$ minimally intersects $\gm$, so that $\pi^{-1}(\alpha)\cap M_K$ is minimizing: 
\begin{equation}\label{equ}
x([\pi^{-1}(\alpha)\cap M_K])=\displaystyle{\sum_i{i(\alpha,\gamma_i)}}.
\end{equation}

The technical part of Thurston's proof is the construction of a foliation on $N_K$ without Reeb component having $\pi^{-1}(\alpha)\cap M_K$ as a leaf. 

Equation (\ref{equ}) describes exactly an equality between Thurston norm on ~$N_K$ and the intersection norm on the torus associated to $\gm$. It can be rewritten as follows: $$x(a)=N_{\gm}(\pi_*(a)).$$ 

Now, we will extend Thurston's construction to higher genus surfaces using intersection norms, namely for every circle bundle $\pi:M\longrightarrow \sg_g$ of Euler number ~1. For the general case, there are essentially two differences:
\begin{itemize}
\item there exist filling collections that are not homologicaly  nontrivial  since the collection can be made of only separating closed simple curves,
\item there are examples  of filling collections $\gm$ and $N_{\gm}$-minimizing oriented curves $\alpha$ for which $\pi^{-1}(\alpha)\cap N_K$ is not minimizing for the Thurston norm (see Figure \ref{canal}). 
\end{itemize}

\begin{figure}[htbp]
\begin{center}
\includegraphics[scale=0.17]{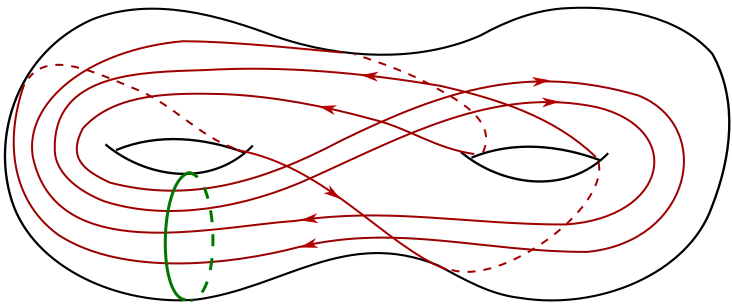}\hspace{1cm}
\includegraphics[scale=0.18]{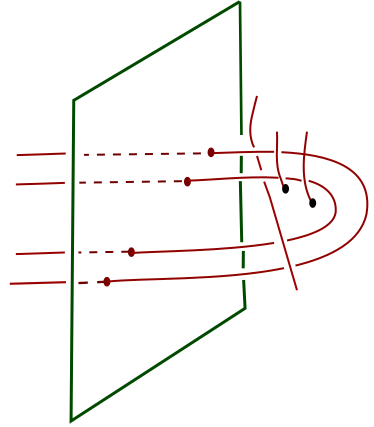}\hspace{1cm}
\includegraphics[scale=0.18]{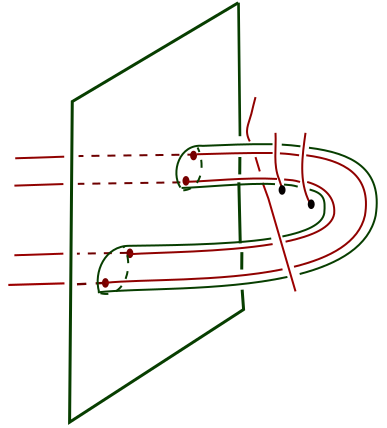}
\caption{The vertical surface $S$ (in the middle) over the green curve is a torus with four boundary components. By replacing these four boundary components by a handle, one obtained  a genus $2$ closed surface $S'$ (right picture) and $|\chi(S')|<|\chi(S)|$.}
\label{canal}
\end{center}
\end{figure}  

\begin{section}{Incompressible surfaces in nontrivial  knot complements in circle bundles.}\label{sec2}

 Given a $3$-manifold, a natural question is to classify incompressible surfaces up to isotopy. For the case of a circle bundle, a complete answer has been given by Waldhausen \cite{waldh}. Let us recall it in its general form. 
 
 Let $\pi:M\longrightarrow\sg_{g,k}$ be a circle bundle over a genus $g$ orbifold with $k$ cone points. We denote by ~$e(M)$ the Euler number of $M$. 
\begin{tHm}[F. Waldhausen \cite{waldh}]\label{waldh}
Let $S$ be an incompressible surface in $M$. Then, up to isotopy, $S$ is vertical, namely $\pi^{-1}(\pi(S))=S$, or horizontal, that is $\pi|_S:S\longrightarrow \sg_{g,k}$ is a branched cover. 
\end{tHm}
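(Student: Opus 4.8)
The plan is to put $S$ in a good position with respect to the circle fibration and then analyze the combinatorics of its intersection with a fixed system of vertical surfaces. First I would dispose of the cone points: removing a fibered solid torus neighborhood of each of the $k$ exceptional fibers turns $M$ into a genuine $S^1$-bundle $M_0$ over the surface-with-boundary $\sg_{g,k}^\circ$ obtained by deleting $k$ open disks. Since an $S^1$-bundle over a surface with nonempty boundary is trivial, $M_0\cong \sg_{g,k}^\circ\times S^1$, and I may assume (after a small isotopy using incompressibility) that $S$ meets each boundary torus of $\partial M_0$ in a family of curves that are either all fibers or all transverse to the fibers. Throughout I assume $S$ is two-sided and has no sphere or disk components, the one-sided case being recovered from the boundary of a twisted $I$-bundle neighborhood.

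Next, fix a collection of disjoint essential arcs and curves $c_1,\dots,c_m$ cutting $\sg_{g,k}^\circ$ into disks, and let $A_i=\pi^{-1}(c_i)\cap M_0$ be the corresponding vertical annuli and tori. Isotope $S$ so as to minimize $|S\cap(\bigcup_i A_i)|$. The heart of the argument is that incompressibility (and $\partial$-incompressibility along $\partial M$) forces every component of $S\cap A_i$ to be essential in both $S$ and $A_i$: an inessential intersection curve would bound a disk that, by an innermost-disk isotopy, could be pushed across to lower the intersection number, contradicting minimality. In an annulus (or torus) an essential simple closed curve is isotopic to the fiber, and an essential arc runs across it; hence along each $A_i$ the surface $S$ is locally either vertical (its trace consists of fiber-parallel curves) or horizontal (its trace is transverse to the fibers).

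Cutting $M_0$ along $\bigcup_i A_i$ yields a disjoint union of fibered solid tori $D^2\times S^1$, and $S$ is cut into incompressible, $\partial$-incompressible surfaces in these pieces whose boundaries lie on the vertical annuli. A standard classification of such surfaces in a solid torus shows each piece is either a vertical annulus $(\mathrm{arc})\times S^1$ or a horizontal meridian disk, i.e.\ a section of $D^2\times S^1\to D^2$. Because two adjacent pieces must agree along their common annulus $A_i$, where $S$ is uniformly vertical or uniformly horizontal, the type propagates across $S$, so a connected incompressible $S$ is globally vertical or globally horizontal. Reinserting the neighborhoods of the exceptional fibers, a vertical piece closes up to $\pi^{-1}(\pi(S))=S$, while a horizontal piece extends over each solid torus as a branched section, so that $\pi|_S\colon S\to\sg_{g,k}$ becomes a branched cover with branch points exactly over the cone points.

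The step I expect to be the main obstacle is the good-position and minimization analysis: making precise that the intersection curves can be made essential and that minimality is achieved, together with the propagation of the vertical/horizontal type through the solid-torus pieces and across the exceptional fibers. Controlling the branching over cone points --- the reason a horizontal surface is a branched rather than honest cover once $k>0$ --- is where the Euler number $e(M)$ and the Seifert data enter, and it is the only place the orbifold structure genuinely complicates the product argument.
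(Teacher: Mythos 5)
The paper does not prove this statement: it quotes Waldhausen's theorem as a known result and refers the reader to Hatcher's notes \cite{hatcher} for the proof. Your sketch is exactly the standard argument from that source --- trivialize the bundle over the base punctured at the cone points, put $S$ in minimal position with respect to a system of vertical annuli and tori, use incompressibility and $\partial$-incompressibility to make all intersection curves essential, classify the resulting pieces in the fibered solid tori as vertical annuli or horizontal (meridian) disks, and propagate the type along a connected $S$ before reinserting the exceptional fibers --- so it is correct in outline and takes essentially the same route as the cited proof.
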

\noindent One can check the proof of Waldhausen's theorem in Hatcher's notes \cite{hatcher}.\vspace{0.2cm}

Vertical surfaces correspond to pre-images of essential simple closed curves in $\sg_{g,k}$ and a horizontal surface $S$ satisfies:
$$\chi(\sg_{g,k})-\frac{\chi(S)}{n}=\displaystyle{\sum_i{(1-\frac{1}{q_i})}};$$
where $n$ is the degree of the branched cover and the $q_i$ are the multiplicities of the fibers. When the base is a regular surface $\sg_g$, then $\chi(S)=n.\chi(\sg_g)$.

The existence of horizontal surfaces in $M$ depends on the Euler number. More precisely, a circle bundle admits a horizontal surface if and only if its Euler number is zero (\cite{hatcher}, Proposition 2.2). So, $H_2(M)$ is isomorphic to $H_1(\sg_g)$ when $e(M)\neq0$, with vertical surfaces as representatives of elements of $H_2(M)$.

Now, we push Waldhausen's classification a little bit further. Let $K$ be an oriented knot in $M$ ($e(M)\neq0$) that is nontrivial in homology, and $M_K:=M-\overset{\circ}{T}(K)$ be the exterior of $K$ in $M$. 

 \begin{Prop}(Proposition 1)\label{propp} 
 Let $S$ be an incompressible surface in $M_K$ and $\bar{S}$ its closure in $M$. There is sequence of incompressible surfaces $S_0\longrightarrow S_1\longrightarrow...\longrightarrow S_n=\bar{S}$ such that:
\begin{itemize}
\item $S_0$ is a disjoint union of vertical surfaces;
\item $S_{i+1}$ is obtained by attaching a handle to $S_i$. 
 \end{itemize}
 \end{Prop}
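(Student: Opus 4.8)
The plan is to isotope the closed surface $\bar{S}\subset M$ into a normal form with respect to the circle fibration $\pi$, in the spirit of Waldhausen's proof, and then to read the required sequence off the tangencies between $\bar{S}$ and the fibers. First I would perturb $\bar{S}$ so that $\pi|_{\bar{S}}:\bar{S}\longrightarrow\sg_g$ is a generic smooth map between surfaces: its critical locus is then a disjoint union of fold circles (with finitely many cusps), along which $\bar{S}$ is tangent to the fibers, and away from which $\pi|_{\bar{S}}$ is a local covering. Using that $\bar{S}$ is two-sided in the orientable manifold $M$, I would cancel superfluous tangencies and reduce their number as far as possible, so that over a generic point of $\sg_g$ the surface meets the fiber transversally and the count of intersection points jumps by $\pm 2$ across each fold image.

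The next step is to isolate the vertical part. Over the complement of the fold images $\bar{S}$ is horizontal, i.e. it covers the base; but since $e(M)\neq 0$ there is no closed horizontal incompressible surface in $M$, so these horizontal sheets cannot close up by themselves and are forced to be glued through the fold tangencies. I would therefore let $S_0$ be the vertical surface $\pi^{-1}(c)$ over the multicurve $c\subset\sg_g$ carrying the fiberwise intersection pattern of $\bar{S}$; after discarding inessential and fiber-parallel components, $S_0$ is a disjoint union of vertical tori, which is incompressible by Waldhausen's theorem and satisfies $\chi(S_0)=0$.

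Then I would recover $\bar{S}$ from $S_0$ by attaching one handle per remaining tangency, ordered as the fold images are swept out over the base. Each fold is exactly the local model in which two vertical sheets are joined along a fiber by a tube, which is a handle attachment in the sense of the statement and lowers $\chi$ by $2$ at each step. To keep every intermediate surface $S_i$ incompressible I would argue inductively: a compressing disk $D$ for $S_{i+1}$ can be isotoped off the last tube, since that tube runs in the fiber direction, and $D$ then descends to a compressing disk for $S_i$, contradicting the inductive hypothesis whose base case is the incompressibility of the vertical $S_0$.

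The main obstacle I anticipate is making this tangency bookkeeping rigorous while preserving incompressibility at every stage: cancelling the center and cusp tangencies without creating new intersection circles, choosing $c$ so that $\pi^{-1}(c)$ is genuinely incompressible rather than a union of parallel or inessential tori, and checking that each fold corresponds to a tube trivial enough that the compressing-disk descent goes through. The delicate point is controlling a putative compressing disk inside $M$ rather than $M_K$; here I expect to use that $\pi(K)$ is a nontrivial homology class — which is what makes the fiber class nonzero and ultimately rules out the horizontal closings — to run the descent and to guarantee that $\bar{S}$ itself sits at the top of a genuine chain of incompressible surfaces.
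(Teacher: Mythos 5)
There is a genuine gap, and it comes from applying the fibration analysis to the wrong surface. The paper's argument is much simpler and runs in the opposite direction: since $\bar{S}$ is in general \emph{compressible} in $M$, one compresses it repeatedly (cut along an essential curve bounding a disk in $M$, cap off), a process that strictly decreases complexity and therefore terminates in an incompressible surface of $M$; by Waldhausen's theorem and the fact that $e(M)\neq 0$ excludes closed horizontal surfaces, that terminal surface is a disjoint union of vertical surfaces, and reversing the compression sequence gives exactly the handle attachments of the statement. Your proposal instead tries to put $\bar{S}$ itself into a Waldhausen-type normal form with respect to $\pi$. But the vertical/horizontal dichotomy is a statement about \emph{incompressible} surfaces, and $\bar{S}$ is typically not incompressible in $M$ (this is the whole point of the proposition; see the genus-$2$ example of Figure~2, where the closed surface obtained by adding a handle to a vertical one is compressible in $M$). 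A generic position argument for a compressible surface gives you horizontal sheets glued along fold circles, which is not the structure you need: away from the folds the surface is transverse to the fibers, i.e.\ horizontal, so your candidate $S_0=\pi^{-1}(c)$ is not obtained from $\bar{S}$ by removing handles in any evident way, and the claim that ``each fold is exactly the local model in which two vertical sheets are joined along a fiber by a tube'' is unsubstantiated — the handles of the proposition are reversed compressions, whose compressing disks sit anywhere in $M$, not tubes along fibers.

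Your inductive incompressibility step is also aimed at a false target: you argue that every $S_i$, including $S_n=\bar{S}$, admits no compressing disk in $M$, which would force $\bar{S}$ to be vertical and the sequence to be trivial. The incompressibility in the statement has to be read relative to $M_K$ (the handles are precisely the compressions of $\bar S$ that become essential once $K$ is removed), and the homological nontriviality of $\pi(K)$ is used elsewhere (to identify $H_2(M_K,\partial M_K)$ with $H_2(M)$), not to rule out compressing disks. The missing idea, in short, is to compress first and only then invoke Waldhausen, rather than to seek a fibered normal form for a surface to which Waldhausen's theorem does not apply.
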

 \begin{proof}
 Let $\bar{S}$ be the closure of $S$ in $M$. If $\bar{S}$ is incompressible in $M$, then $\bar{S}$ is vertical and $S_0=S_n=\bar{S}$.
 
 If $\bar{S}$ is not incompressible, we obtain a sequence $\bar{S}\longrightarrow S_1\longrightarrow...\longrightarrow ~S_n$, where each step consists of cutting $S_i$ along an essential simple curve which bounds a disk in $M$, and taking the closure of the surface obtained. This process ends with a (possibly non connected) incompressible surface $S_n$ in $M$ which is a disjoint union of vertical surfaces. The reverse sequence achieves the proof. 
 \end{proof}
 
 The proposition above shows that the only obstruction for an incompressible surface to be vertical comes from attaching handles like in Figure ~\ref{canal}.
 
 \begin{definition} Let $S$ be an incompressible surface in $M_K$ and $\alpha$ an essential simple curve on $\bar{S}$ which bounds a disk $\mathbb{D}_{\alpha}$ in $M$. The \textit{\textbf{weight}} of $\alpha$ is the integer ~$w(\alpha)$ defined by:
 $$w(\alpha)=\min\{\rm{card}\{\mathbb{D}_{\alpha'}\cap K\}, \alpha' \hspace{0,1cm} \rm{isotopic\hspace{0,1cm} to}\hspace{0,1cm} \alpha\}$$ 
 
The \textit{\textbf{verticality defect}} of $S$ is the integer $vd(S)$ defined by:
$$vd(S)=\max_{\alpha}\{w(\alpha), \alpha \hspace{0,1cm} \rm{essential}\}.$$  
 \end{definition}\vspace{0,5cm}
 
 \noindent It is easy to see that if $S$ has verticality defect equal to zero, then $S$ is a vertical surface: $$S=\pi^{-1}(\alpha)\cap M_K,$$ where $\alpha$ is a simple closed curve on $\sg_g$. Moreover, if $vd(\bar{S})=1$, then $S$ is homologous to a vertical surface with the same Euler characteristic. In fact, if $\alpha$ is a simple curve on $\bar{S}$ such that $w(\alpha)=1$, we can cut $\bar{S}$ along $\alpha$ to obtain a surface ~$\bar{S}_1$. The surface $S_1:=\bar{S}_1\cap M_K$ has two more boundary components than $S$ and one handle less and is homologous to $S$. It follows that $\chi(S)=\chi(S_1)$. Repeating this process, we obtain a vertical surface $S_n$ with the same Euler characteristic as $S$.    
 
 We end this section with some definitions.\\
 Let $A$ and $B$ be two sub-arcs of $K$ such that $a:=\pi(A)$ and $b:=\pi(B)$ are simple arcs with extremities $\partial a=\{t,x\}$ and $\partial b=\{y,z\}$. Let $\lambda_1$ and $\lambda_2$ be two arcs from $t$ to $y$ and $x$ to $z$, respectively, such that $\lambda_1$, $a$, $\lambda_2$ and $b$ bound a topological disk.
 
\begin{figure}[htbp]
\begin{center}
\includegraphics[scale=0.22]{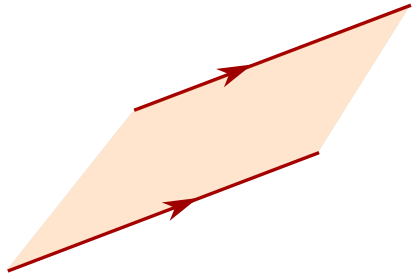}\hspace{2cm}
\includegraphics[scale=0.22]{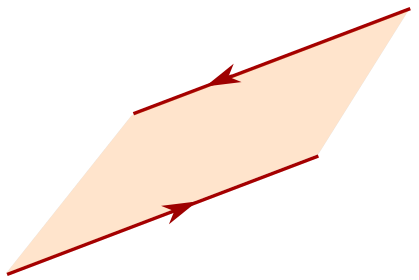}\\ \vspace{0,1cm}
\includegraphics[scale=0.3]{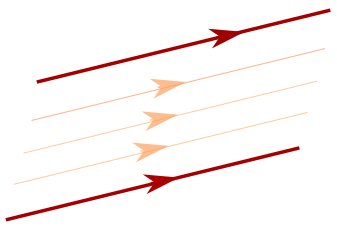}\hspace{2cm}
\includegraphics[scale=0.3]{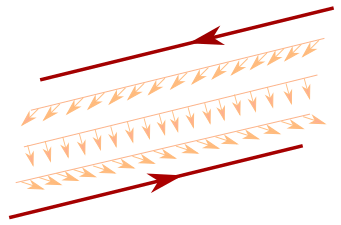}
\put(-75,23){\huge{$\uparrow$}}
\put(-20,23){\huge{$\uparrow$}}
\put(-58,23){$s_0$}
\put(-59,15){$s_{\frac{1}{2}}$}
\put(-61,8){$s_1$}
\put(0,23){$s_0$}
\put(-1,15){$s_{\frac{1}{2}}$}
\put(-2,8){$s_1$}
\put(-85,18){$a$}
\put(-85,-1){$b$}
\put(-28,18){$a$}
\put(-28,-1){$b$}
\caption{Rectangle between two arcs obtained by lifting a homotopy between two sections. On the left, we have the case where the orientations of the arcs agree and on the right we have the case where the orientations are opposite.}
\label{rec}
\end{center}
\end{figure}

  The arcs $a$ and $b$ can be seen as sections of the unit tangent bundle of their supports, and there is a homotopy (see Figure ~\ref{rec}) of sections $s_t$ such that:
 \begin{itemize}
 \item $\rm{pr}_1(s_t)$ is an isotopy between the support of $a$ and $b$, with extremities gliding in $\lambda_1$ and $\lambda_2$;
 \item $s_0=a$ and $s_1=b$.
   \end{itemize}  
The lift of $s_t$ in $M$ gives a rectangle $R$ from $A$ to $B$ and when we blow-up ~$R$, we obtain a handle enclosing $A$ and $B$.

\begin{lemma}
If $H$ is a handle in $M$ enclosing two sub-arcs $A$ and $B$ whose projections are simple arcs, then $H$ is isotopic to the blow-up of a rectangle between $A$ and $B$.  
\end{lemma}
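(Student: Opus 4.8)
The plan is to replace the solid handle by its core and reduce the statement to a uniqueness assertion about rectangles, which I can then settle inside the trivial circle bundle over the disk bounded by $a$, $\lambda_1$, $b$, $\lambda_2$. First I would use that a handle is, by definition, a regular neighborhood of an embedded rectangle, i.e.\ of a disk equipped with a distinguished pair of opposite sides; conversely, two handles in $M$ are ambient isotopic exactly when their core rectangles are isotopic with matching framings, and for a disk-core the blow-up framing is canonical (the normal bundle of a disk in an oriented $3$-manifold has a unique trivialisation up to homotopy). Since $H$ encloses $A$ and $B$, I may isotope it so that $A$ and $B$ become the two opposite sides of its core rectangle $R_H$, the other two sides being arcs $\mu_1,\mu_2$ joining the endpoints of $A$ to those of $B$. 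It then suffices to prove that $R_H$ is isotopic, rel $A\cup B$, to a rectangle $R$ obtained from a section homotopy $s_t$ as in Figure~\ref{rec}; blowing up transports $H$ onto the blow-up of $R$.

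Next I would bring the projected picture into model form. Because $\pi(A)=a$ and $\pi(B)=b$ are simple, and $R_H$ is a disk, its projection $\pi(R_H)$ is a null-homotopic singular disk meeting the base along $a$ and $b$; after a general-position and innermost-arc cleanup I expect to arrange that $\mu_1,\mu_2$ project to simple arcs $\lambda_1,\lambda_2$ for which $a,\lambda_1,b,\lambda_2$ bound a topological disk $D\subset\sg_g$. This is exactly the data used to construct $R$, so I am free to build the reference rectangle over this same $D$. Since $D$ is contractible the bundle trivialises, $\pi^{-1}(D)\cong D\times S^1$, a solid torus and in particular an irreducible, aspherical $3$-manifold; a further isotopy fixing $A\cup B$ pushes $R_H$ into $\pi^{-1}(D)$, while $R$ is by construction the graph of the section over $D$ interpolating between $A$ and $B$, i.e.\ a horizontal disk.

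It then remains to straighten $R_H$ to $R$ inside $D\times S^1$. The side arcs of $R_H$ lie in the annuli $\lambda_i\times S^1$ with the same endpoints as the corresponding sides of $R$, so the only freedom is how far they wind in the fibre direction. Since $A$, $B$ are shared and both $\partial R_H$ and $\partial R$ are null-homotopic (they bound the disks $R_H$, $R$), the loop obtained by comparing the two sets of side arcs is null-homotopic in $D\times S^1$; as the fibre generates $\pi_1(D\times S^1)=\z$ with infinite order, the windings must agree and can be removed. Once $R_H$ and $R$ share their boundary, $\pi_2(D\times S^1)=0$ makes them homotopic rel boundary, and irreducibility of the solid torus upgrades this to an ambient isotopy rel $A\cup B$ carrying $R_H$ to $R$ by the usual innermost-disk (Dehn's lemma) argument.

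The hard part, I expect, is controlling the vertical behaviour of $R_H$: both confining the disk to $\pi^{-1}(D)$ before it is normalised and ruling out fibre winding of its side arcs. These are precisely the points where the circle-bundle hypothesis and the irreducibility of $D\times S^1$ carry the argument, and where the assumption that the projections $a,b$ are \emph{simple} is essential, since it is what keeps $\pi(R_H)$ tame enough to recover the model disk $D$. Once the disk is confined over $D$ and the fibre winding is shown to vanish, the conversion of a homotopy of disks into an ambient isotopy is standard and completes the proof.
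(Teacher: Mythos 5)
Your proposal aims at a strictly stronger statement than the one the paper proves, and the extra strength is exactly where it breaks down. The paper's own argument is a short existence argument: since $H$ is a compressible tube enclosing $A$ and $B$, the solid cylinder it bounds contains an isotopy from $A$ to $B$; the trace of that isotopy is an embedded rectangle $R$ lying inside $H$, and the blow-up of $R$ is a tube parallel to $H$. No normal form for $R$ is claimed: the rectangle produced is allowed to wander and to wind in the fibre direction, and it is only in Lemma~\ref{vd}, using the braiding of $\hat K$ over the double points, that such winding is excluded. You instead try to straighten the core rectangle $R_H$ onto the specific horizontal rectangle over the disk $D$ bounded by $a,\lambda_1,b,\lambda_2$. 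If that normal form held in general, the discussion in Lemma~\ref{vd} of rectangles following a sub-arc of $\hat K$ along a fibre would be vacuous, which should have been a warning sign that you are proving too much.

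Two steps of your argument have genuine gaps. First, the confinement of $R_H$ into $\pi^{-1}(D)$ rel $A\cup B$ is asserted, flagged by you as ``the hard part'', and never actually argued; a handle enclosing $A$ and $B$ may a priori travel anywhere in $M$, and nothing in the hypotheses forces its core disk into the solid torus over $D$. Second, the winding argument fails even inside $D\times S^1$: the fact that $\partial R_H$ and $\partial R$ are both null-homotopic only forces the \emph{sum} of the fibre-windings of the two side arcs $\mu_1,\mu_2$, measured against the corresponding sides of $R$, to vanish, not each winding individually. A disk whose side arcs wind $+k$ and $-k$ has null-homotopic boundary, yet it cannot be carried onto the horizontal rectangle by an isotopy fixing $A\cup B$, because such an isotopy preserves the rel-endpoints homotopy class of each side arc and these classes differ in $\pi_1(D\times S^1)\cong\z$. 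Hence the $\pi_2$/irreducibility step is never reached with matching boundaries. To repair the proof, drop the normal form entirely and argue as the paper does: use compressibility of $H$ to produce an isotopy between $A$ and $B$ inside the solid tube bounded by $H$, take its trace as the rectangle, and observe that its blow-up sits inside $H$ and is parallel to it.
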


\begin{proof}
Since $H$ is a compressible handle enclosing $A$ and $B$, then there is an isotopy between $A$ and $B$ inside $H$. This isotopy gives a rectangle $R$ between ~$A$ and $B$ and the blow-up of that rectangle is inside $H$. Therefore, ~$H$ is isotopic to the blow-up of $R$.  
\end{proof}

The construction described above works for more than two sub-arcs and in what follows, we will consider handles as blow-up of rectangles between sub-arcs.    

\end{section}

\begin{section}{Proof of the main theorem}\label{sec3}
  Let us start this section with the following statement: if two filling collections $\gm$ and $\gm'$ differ by an "attachment" (see Figure ~\ref{attach}), then the intersection norm associated to $\gm$ is equal to the one associated to $\gm'$ (see \cite{Element}). Therefore any intersection norm is realized by one filling curve $\gamma$, not necessarily in minimal position.
  
\begin{figure}[htbp]
\begin{center}
\includegraphics[scale=0.15]{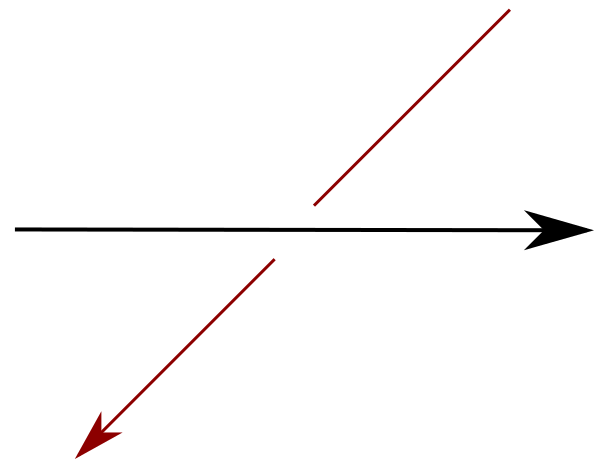}\hspace{3cm}
\includegraphics[scale=0.19]{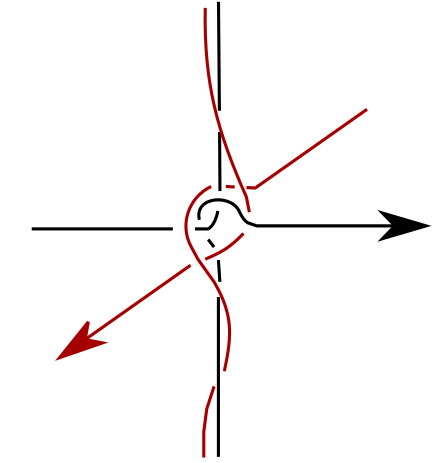} 
\put(-50,11){\huge{$\longrightarrow$}}
\put(-47,15){$(a)$}\\
\includegraphics[scale=0.2]{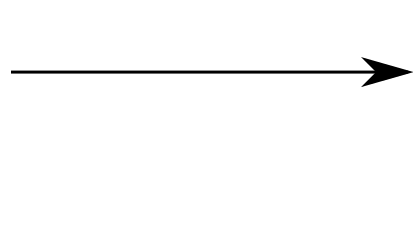}\hspace{3cm}
\includegraphics[scale=0.2]{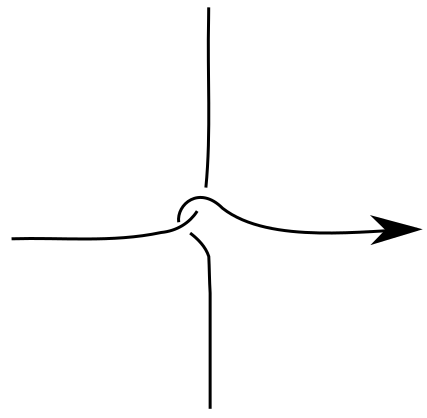}
\put(-50,11){\huge{$\longrightarrow$}}
\put(-47,15){$(b)$}
\caption{($a$) Modification of $K$ around a fiber of a double point of $\gamma$. Each arc (the dark and the red one) individually follows a fiber once, and is linked to itself ($(b)$ shows the modification of a single arc). Along the fiber, the modified arcs form a braid with two components.}
\label{palmier}
\end{center}
\end{figure}  
  
Let $\overrightarrow{\gamma}$ be an oriented filling curve with non vanishing class in homology. Let $\pi:M\longrightarrow\sg_g$ be a circle bundle with Euler number equal to $1$. As we have seen, ~$H_2(M)$ is isomorphic to $H_1(\sg_g)$. Instead of only taking a lift of $\overrightarrow{\gamma}$ in ~$N$, we add the modification depicted in Figure ~\ref{palmier}-$a$ on the neighborhood of the fiber of double points of $\gamma$. Let $\hat{K}$ be the knot obtained and $M_{\hat{K}}$ be the exterior of ~$\hat{K}$. Since ~$\pi(\hat{K})$ is still homologous to $\overrightarrow{\gamma}$, then $H_2(M_{\hat{K}})$ is isomorphic to ~$H_1(\sg_g)$ with vertical surfaces as canonical representatives.
     
As we have seen, Thurston's construction does not trivially extends to higher genus surfaces since a minimizing surface $S$ could have verticality defect greater than two. Our modification, which consists of braiding the knot ~$K$ along fibers (see Figure ~\ref{palmier}-$a$), increases the complexity of  incompressible surfaces with verticality defect greater than two.

\begin{definition}
Let $H_{\alpha}$ be a handle with $\partial H_{\alpha}=\{\alpha_1,\alpha_2\}$. Let $\lambda$ be any simple arc from $\alpha_1$ to $\alpha_2$.
The handle $H_{\alpha}$ is \textit{\textbf{horizontal}} if the homotopy class ---with fixed extremities--- of $\alpha$ in $M$ has no fibers. 
\end{definition}

\begin{lemma}\label{vd}
Let $S_1$ and $S_2$ be two vertical surfaces in $M_{\hat{K}}$ on which we attach a handle $H_{\alpha}$ to obtained a surface $S:=S_1\underset{H_{\alpha}}{\#}S_2$.

If $w(\alpha)\geq 2$, then there is a surface $S'$ homologous to $S$ such that $$-\chi(S')<-\chi(S).$$  
\end{lemma}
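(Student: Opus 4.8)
The plan is to reduce the statement to a local surgery supported near the handle $H_\alpha$ together with the strands of $\hat{K}$ it captures, and then to exploit the braiding of Figure~\ref{palmier} to produce the improvement. First I would use the preceding lemma to realize $H_\alpha$ as the blow-up of a rectangle $R$ spanning sub-arcs of $\hat{K}$, so that the hypothesis $w(\alpha)\ge 2$ becomes the geometric statement that the belt disk $\mathbb{D}_\alpha$ of the handle meets $\hat{K}$ in at least two points; equivalently, at least two strands of $\hat{K}$ thread the handle. Since $S_1$ and $S_2$ are vertical, everything away from $H_\alpha$ is a union of fibers, so the only non-vertical feature of $S$ is this single handle, and it suffices to modify $S$ inside a neighborhood of $H_\alpha\cup(\hat{K}\cap\mathbb{D}_\alpha)$.

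Next I would split into the two cases suggested by the definition of a horizontal handle. If $H_\alpha$ is not horizontal, the core arc is homotopic rel endpoints to a path containing a fiber, so the handle can be isotoped to run along a fiber; replacing the horizontal tube by the corresponding vertical annulus removes the handle at the price of at most one vertical reconnection, and I expect a net strict decrease of $-\chi$. If $H_\alpha$ is horizontal, this is where the braiding does the work: because the two captured strands form the $2$--braid created in Figure~\ref{palmier} around the fiber of a double point of $\gamma$, the belt disk cannot be made disjoint from $\hat{K}$, and I would instead cut $S$ along $\mathbb{D}_\alpha$ and re-tube vertically around the captured strands. The self-linking of the braided strands should let the resulting vertical tubes share a single fiber neighborhood, so the reconnection costs strictly less in $-\chi$ than the original horizontal handle.

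I would then carry out the Euler-characteristic bookkeeping. Attaching $H_\alpha$ raises $-\chi$ by $2$ over $S_1\sqcup S_2$, so it is enough to exhibit a homologous reconnection of $S_1$ and $S_2$ that raises $-\chi$ by at most $1$; the vertical re-tubing produces such a surface, giving $-\chi(S')<-\chi(S)$. To see that $S'$ is homologous to $S$, I would observe that the entire modification is supported in a ball, namely a regular neighborhood of $H_\alpha$ together with the captured strands, so it does not change the projected class $\pi_*[S]\in H_1(\sg_g)$; under the isomorphism $r$, vertical representatives are pinned down by this projected class, whence $[S']=[S]$ in $H_2(M_{\hat{K}},\partial M_{\hat{K}})$.

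The main obstacle, and the step deserving the most care, is the horizontal case: I must show that the braiding genuinely forces the improvement, that is, that a $2$--braid of captured strands can always be resolved by an \emph{embedded} vertical reconnection of strictly smaller complexity, whereas the weight-one case discussed above admits no such resolution. This requires a careful local analysis of how the blown-up rectangle meets the braided strands along the fiber of a double point, together with a verification that the re-tubed surface $S'$ remains embedded throughout the isotopy.
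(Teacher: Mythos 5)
Your case analysis has a genuine gap, and it sits exactly at the configuration that makes this lemma delicate. The paper's dichotomy is not ``horizontal versus non-horizontal'' but ``$\pi(H_{\alpha})$ contains a double point of $\gamma$ or it does not,'' and your treatment of the horizontal case silently assumes the wrong half of it. From $w(\alpha)\geq 2$ you only get that the belt disk meets $\hat{K}$ in at least two points; nothing forces those two strands to be the $2$--braid of Figure~\ref{palmier}. A horizontal handle can perfectly well capture two parallel, unbraided lifts of arcs of $\gamma$ that do not cross underneath it --- this is precisely the situation of Figure~\ref{canal} --- and there the braiding gives you no leverage: no fiber of a double point meets the handle, so no vertical re-tubing along such a fiber is available. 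The paper handles this case by a different mechanism that is absent from your proposal: it exhibits an explicit compressing curve $\beta$, the sum of two fibers of $S_1$ and $S_2$ along $H_{\alpha}$ (Figure~\ref{fig1}-a), which is essential in $S$ yet bounds a disk in $M_{\hat{K}}$ exactly because no double point (hence no braiding) lies under the handle; compressing along $\beta$ drops $-\chi$ by $2$. Your other branch is also not an argument: ``the handle can be isotoped to run along a fiber \dots and I expect a net strict decrease'' is an expectation, and in the paper this branch never needs separate treatment, because the rectangle realization (which you correctly invoke) shows that a handle whose projection contains a double point is automatically horizontal --- a rectangle stays on one side of an arc and so cannot follow a strand of $\hat{K}$ around a fiber (Figure~\ref{fig1}-b).

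Where your proposal does meet the paper is the genuinely braided case: when a double point $p$ lies under the handle, the fiber $\pi^{-1}(p)$ meets the horizontal handle twice, the braiding forces four punctures by $\hat{K}$, and one re-tubes vertically along $\pi^{-1}(p)$, trading four boundary components for one new handle and dropping $-\chi$ by $2$. Note, however, that your bookkeeping target (``a reconnection that raises $-\chi$ by at most $1$'' over $S_1\sqcup S_2$) is unattainable as stated, since these tubing and capping operations change $\chi$ by even amounts; the count that actually closes the argument is the four-for-one trade just described (or the compression along $\beta$ in the unbraided case), each of which yields $-\chi(S')=-\chi(S)-2$.
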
 

\begin{proof}
If $\pi(H_{\alpha})$ does not contain a double point of $\overset{\rightarrow}{\gamma}$, then $S$ is compressible; the curve $\beta$ (Figure ~\ref{fig1}-a) which is obtained by summing to fibers in $S_1$ and ~$S_2$ along $H_{\alpha}$ is essential in $S$ and vanishes in $M_{\hat{K}}$. So, we can reduce the complexity of $S$ in this case.

\begin{figure}[htbp]
\begin{center}
\includegraphics[scale=0.15]{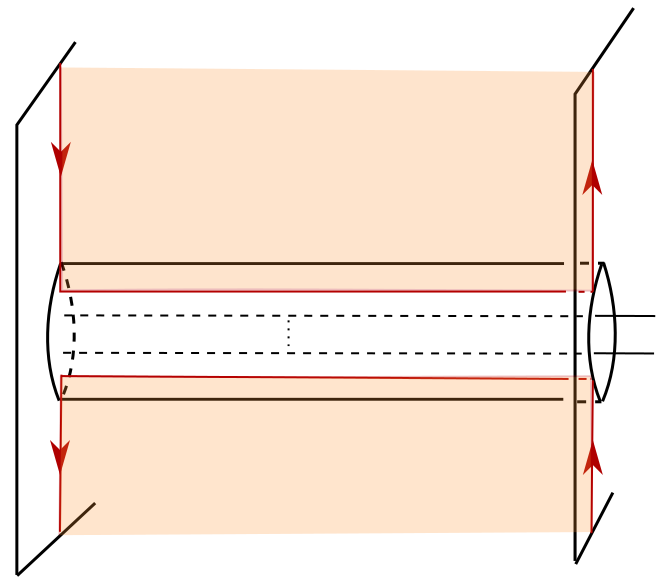}
\put(-3,19){\small{$\beta$}}
\put(-37,-2){\small{$S_1$}}
\put(-4,-2){\small{$S_2$}}
\put(-20,-3){\small{(a)}}\hspace{2cm}
\includegraphics[scale=0.20]{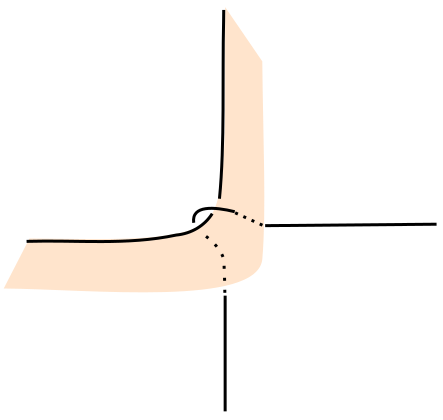}
\put(-18,-3){\small{(b)}}
\caption{(a) Compression disk in $M_{\hat{K}}$ bounded by an essential curve ~$\beta$ in ~$S$. (b) The arc around the fiber of a double point which intersects a rectangle. This shows that the rectangle cannot go completely along the fiber.}
\label{fig1}
\end{center}
\end{figure}

Now, suppose that  $\pi(H_{\alpha})$ contains double points of $\overset{\rightarrow}{\gamma}$. We claim that ~$H_{\alpha}$ is horizontal. Let us see $H_{\alpha}$ as the blowing up of a rectangle between sub-arcs of $\hat{K}$. Since a rectangle stays on one side of an arc, it follows that it cannot follow a sub-arc of $\hat{K}$ along a fiber (see Figure ~\ref{fig1}-b).

Finally, if $H_{\alpha}$ is horizontal and $\pi(H_{\alpha})$ contains a double point $p$, then the fiber ~$\pi^{-1}(p)$ intersects $H_{\alpha}$ twice. Therefore $H_{\alpha}$ intersects $\hat{K}$  four times the modification above $p$ and those four intersection points define four boundary components on $S$. By attaching a new handle along the fiber $\pi^{-1}(p)$ which encloses those four boundary components we obtain a surface $S'$ with one more handle and four boundary components less.
 So $-\chi(S')\leq-\chi(S)$.   
\end{proof}  

\begin{cor}\label{cor1}
Let $S$ be a surface embedded in $M_{\hat{K}}$. If $S$ is Thurston norm minimizing, then $vd(S)\leq1$.
\end{cor}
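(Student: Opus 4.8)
The plan is to argue by contradiction, playing the minimality of $S$ against the complexity reduction furnished by Lemma \ref{vd}. First I would record that a Thurston norm minimizing surface is incompressible (as noted in the excerpt, a compression strictly reduces complexity), so Proposition \ref{propp} applies to $S$: there is a sequence $S_0\longrightarrow S_1\longrightarrow\cdots\longrightarrow S_n=\bar S$ in which $S_0$ is a disjoint union of vertical surfaces and each step attaches a handle. Read backwards, this exhibits $\bar S$ as a disjoint union of vertical sheets to which finitely many handles $H_1,\dots,H_n$ have been glued, and $S=\bar S\cap M_{\hat K}$.

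Next, suppose toward a contradiction that $vd(S)\geq 2$. By definition of the verticality defect there is an essential simple closed curve $\alpha$ on $\bar S$ bounding a disk $\mathbb{D}_\alpha$ in $M$ with $w(\alpha)\geq 2$. The point is that such an $\alpha$ is the belt (co\nobreakdash-core) circle of one of the handles $H_j$: since $\alpha$ is essential on $\bar S$ yet compresses in $M$, after isotopy it must run once over a single handle, whose two feet lie on vertical sheets of the Proposition \ref{propp} decomposition. This realizes $H_j$ as a handle $H_\alpha$ attached to two vertical surfaces $S_1,S_2$ exactly as in the hypothesis of Lemma \ref{vd}, with belt weight $w(\alpha)\geq 2$. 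Invoking Lemma \ref{vd} then produces a surface $S'$ homologous to $S$ with $-\chi(S')<-\chi(S)$; as $S'$ lies in the same class of $H_2(M_{\hat K},\partial M_{\hat K})$ and has strictly smaller complexity, $S$ cannot be minimizing, a contradiction. Hence $vd(S)\leq 1$.

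The main obstacle I anticipate is the identification in the second paragraph: matching the abstract maximizer $\alpha$ of $vd(S)$ with a \emph{single} handle of the decomposition whose two ends rest on \emph{vertical} surfaces, so that Lemma \ref{vd} applies verbatim. In general the decomposition glues several handles, the two sheets joined by $H_\alpha$ need not themselves be vertical, and a handle may be non-separating (so $S_1=S_2$). To bypass this I would induct on the number $n$ of handles in Proposition \ref{propp}: peel off the handle carrying $\alpha$, apply Lemma \ref{vd} to lower $-\chi$, and re-apply Proposition \ref{propp} to the resulting surface; the base case $n=0$ is a vertical surface with $vd=0$. The delicate points to verify are that the local surgery near $\mathbb{D}_\alpha$ does not raise the weights of the remaining essential curves, that the disk realizing $w(\alpha)$ meets $\hat K$ precisely in the braided fibers forced by the construction of Figure \ref{palmier}, and that Lemma \ref{vd} indeed covers the non-separating case $S_1=S_2$.
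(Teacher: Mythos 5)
Your proposal follows essentially the same route as the paper's own proof: both pass from minimality to incompressibility, invoke Proposition~\ref{propp} to present $\bar S$ as vertical pieces joined by finitely many handles, and then apply Lemma~\ref{vd} to exclude any handle of weight at least $2$. The delicate points you flag in your last paragraph (matching the curve $\alpha$ realizing $vd(S)\geq 2$ with the belt of a single handle whose feet lie on vertical sheets, and the non-separating case $S_1=S_2$) are genuine, but the paper's own three-line argument passes over them just as quickly, so you have not taken a different road --- you have merely been more candid about its potholes.
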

\begin{proof}
Since a minimizing surface $S$ is incompressible, by Proposition \ref{propp}, ~$S$ is obtained by attaching finitely many handle between vertical surfaces embedded in $M_{\hat{K}}$. By Lemma \ref{vd}, each handle has height less than or equal to ~$1$. It follows that $vd(S)\leq1$.    
\end{proof}

Now, we are able to prove the main theorem.
\begin{proof}[Proof of the main theorem]
Let $S$ be a (Thurston norm) minimizing surface in $M_{\hat{K}}$. By Corollary ~\ref{cor1}, $vd(S)\leq 1$. If $vd(S)=0$ then $S=\pi^{-1}(\alpha)\cap M_{\hat{K}}$. So $x(S)=N_{\gamma}(\alpha)$.

If $vd(S)=1$, then one can replace every handle of $S$ by two boundaries by cutting along essential simple curves in $S$ which are trivial in $M$. This operation does not increase the positive part of the Euler characteristic and we obtain at the end an incompressible surface $S'$ in the same homology class as $S$ and such that $vd(S')=0$. Again in this case, there is a vertical surface which minimizes the Thurston norm. 
So $x(a)=N_{\gm}({\pi_*(a)})$.  
\end{proof}

Homologicaly nontrivial polytopes  realize by our construction do not have fibered faces since a fibration of $M_{\hat{K}}$ by vertical surfaces would have given a foliation on $\sg_g$ without singularities.   

Our main theorem links the realization problems of intersection norms and Thurston norms. In \cite{Element}, we showed that any polytope ~$P$ in $\mathcal{P}_8$ (the set of non degenerate symmetric sub-polytopes of $[-1,1]^4$ with eight vertices) is not the dual unit ball of an intersection norm.

\begin{question}
Let $P\in\mathcal{P}_8$. Is $P$ the dual unit ball of an Thurston norm on a 3-manifold?
\end{question}

By Gabai theorem on the fact that minimizing surfaces are leaves of foliations without Reeb component, this question is somehow related to the studying of the topology of (Reebless) foliated 3-manifold with pairs of pants or one-holed torus as a leaves. 
\end{section}
\begin{paragraph}{Acknowledgements:} I am grateful toward Pierre Dehornoy and Jean-Claude Sikorav for introducing me to this subject. 
\end{paragraph}
\vspace{0,5cm}

Institut Fourier, Université Grenoble Alpes.\\
\textit{email: abdoul-karim.sane@univ-grenoble-alpes.fr}
\end{document}